\DeclareMathAlphabet{\mathpzc}{OT1}{pzc}{m}{it}
\def\kfield{\mathbf k}
\def\SC{{\mathcal{SC}}}
\def\Sing{C\hsc}
\def\hsc{{\mathrm{\bf{sc}}}}
\def\P{{\mathcal{P}}}
\def\C{{\mathcal{C}}}
\def\R{{\mathbb R}}
\def\cl{\mathpzc c}
\def\op{\mathpzc o}
\numberwithin{equation}{section}
\newtheorem{thm}{Theorem}[section]
\newtheorem*{thm*}{Theorem}
\newtheorem{prop}[thm]{Proposition}
\newtheorem{cor}[thm]{Corollary}
\theoremstyle{definition}
\newtheorem{defn}[thm]{Definition}
\theoremstyle{remark}
\theoremstyle{remark}
\begin{document}

\title[Non-formality of the Swiss-Cheese operad]{Non-formality of the Swiss-Cheese operad}
\author{Muriel Livernet}
\address{Universit\'e Paris 13, Sorbonne Paris Cit\'e, LAGA, CNRS, UMR 7539, 93430 
  \mbox{Villetaneuse}, France}
\email{livernet@math.univ-paris13.fr}
\keywords{Formality, Swiss-cheese operad, Massey type products}
\subjclass[2010]{55Pxx, 55P48, 18D50, 55S30}

\date{\today}

\maketitle

\begin{abstract}
In this note, we prove that the Swiss-cheese operad is not formal. We also give a criteria in terms of Massey operadic product for the non-formality
of a topological operad.
\end{abstract}

\section*{Introduction}

The question of formality finds its roots in rational homotopy theory. In \cite{Sullivan77}, D. Sullivan proves that the rational homotopy type of a space  
is entirely determined by a commutative differential graded algebra. The space is formal if this algebra is 
quasi-isomorphic to its homology,  
that is, if the rational homotopy type of the space is entirely determined by its cohomology ring. This notion has been applied with 
great success in \cite{DGMS}, where the authors 
prove that the real homotopy type of a simply connected compact K\"ahler manifold is entirely determined by its cohomology ring.
The question of non-formality has its own interest, in particular in the study of symplectic manifolds. In such cases, one can ask whether 
a symplectic manifold admits a  K\"ahler structure or not.  A symplectic manifold which is 
not formal is certainly not a K\"ahler manifold (see e.g. \cite{FM08}).

More generally,  formality is closely related to deformation of structures. And structures are ruled by operads. One of the most famous 
topological operad is the little $d$-discs operad of Boardman and Vogt \cite{BV}, recognizing iterated loop spaces (see \cite{May}). 
An operad is formal if its singular chain complex is an operad quasi-isomorphic to its homology operad.  Algebras over the homology operad of 
the $d$-little discs operad are $d$-Gerstenhaber algebras.
In \cite{Kontse99}, M. Kontsevich lays the groundwork for the formality of the little $d$-discs operad and its applications (see the introduction of 
Giansiracusa and Salvatore in \cite{GS12} for a history of the proof of the formality of the little
$d$-discs operad). The Swiss-cheese operad appears in the work of Voronov in \cite{Voronov99} and Kontsevich in \cite{Kontse99} as a tool to handle 
actions between $d$-Gerstenhaber algebras and $(d-1)$-Gerstenhaber algebras and their deformations. 

\medskip

In this note, we prove that the  
Swiss-cheese operad $\SC_d$ is not formal for any $d\geq 2$. Let us comment some related results. For $d=2$, Dolgushev studies in \cite{Dolgushev11} 
the first sheet of
the homology spectral sequence for the Fulton-MacPherson version of the
Swiss-cheese operad and proves that it is not formal. However,  this does not imply the non-formality of $\SC_2$, because we proved with E. Hoefel in
\cite{HoeLiv13} that the homology spectral sequence, though collapsing at page 2, does not converge as an operad, to the homology operad of $\SC_2$.
Lambrechts and Volic study the inclusion of the little $n$-discs operad into the little $m$-discs operad and prove in \cite{LV08}, that the inclusion is 
formal
for $m>2n$. Turchin and Willwacher show in \cite{TurWill} that this inclusion is not formal if $m=n+1$. Their proof is based on Kontsevich's 
graph complex.  There might be a link between the deformations of the aforementioned inclusion and the
deformations of the Swiss-cheese operad, but this is not clear. If there is,
the technics involved in our paper are much simpler because they only use basic algebraic topology. Indeed, 
we prove the non-formality by building non-vanishing Massey operadic products. 
The first three sections of the note focus on the $2$-dimensional Swiss-cheese 
operad, whereas the fourth section treats the $d$-dimensional case. The last section is of independent interest and 
gives a criteria for non-formality of a topological operad using
Massey operadic products.

\medskip

\section{The two-dimensional Swiss-cheese operad}

The Swiss-cheese operad is a two-colored operad that has been defined by Voronov in \cite{Voronov99}.  There is another version of the Swiss-cheese operad given by Kontsevich in \cite{Kontse99}. The homology of these two topological colored operad has been studied by Hoefel and the author in
 \cite{HoeLiv12} and \cite{HoeLiv13}.  In this note, we consider the Kontsevich version of the Swiss-cheese operad  denoted by 
 $\SC$ as in \cite{HoeLiv13}.
It is a two-colored topological operad, whose colors are the closed color $\cl$ and the open color $\op$. 

\smallskip

The topological space $\SC(x_1,\ldots,x_k;\cl)$ is the empty space if $k=0$ or if $\exists i,x_i=\op$ and 
the  ordered configuration space of $k$ nonoverlapping discs  in the unit disc in the plane otherwise.

For a $k$-tuple $(x_1,\ldots,x_k)\in \{\op,\cl\}^k$, let $n$ (resp. $m$) be the number of $x_i$'s such that $x_i=\cl$ (resp. $x_i=\op$).
The topological space $\SC(x_1,\ldots,x_k;\op)$ is the empty space if $k=0$ and otherwise the ordered configuration space of  
nonoverlapping $n$ discs and $m$ semidiscs  in the unit  upper semidisc in the plane, the semidiscs being centered 
on the real line. With our convention $m$ can be zero, that is, we allow  operations having only closed inputs and an open output.

\smallskip
Any colored operad comes equipped with an action of the symmetric groups, that is, if $\P$ is an $I$-colored operad, 
there is an isomorphism $\psi_\sigma:\P(x_1,\ldots,x_n;y)\rightarrow \P(x_{\sigma(1)},\ldots,x_{\sigma(n)};y)$, for every $\sigma\in S_n$,
$x_1,\ldots,x_n,y\in I$. In the sequel we use the notation $\psi_\sigma(p)=p\cdot\sigma$.

The topological space obtained from $\SC$ by considering the
 $n$ first inputs of color $\cl$, the last $m$ inputs of color $\op$ with the output of color $x\in\{\op,\cl\}$ is denoted by $\SC(n,m;x)$. 
As pointed out above,  the other spaces 
 describing $\SC$ are obtained using the action of the symmetric group. For instance $\SC(\op,\cl;\op)=\SC(1,1;\op)\cdot (21)$ where $(21)$ denotes the transposition in the symmetric group of two variables.

\subsection{The homology of the operad $\SC$}
We work over a field $\kfield$. We recall that the singular chain complex over $\kfield$ of a topological operad is an operad in the category of differential graded $\kfield$-vector spaces (or dg operad for short).
The singular chain complex of $\SC$ is denoted by $\Sing$ and its homology is denoted  by $\hsc$.

We recall from \cite[Proposition 3.2.1]{HoeLiv13} that algebras over the operad $\hsc$ are triples $(G,A,h)$ where 
$G$ is a Gerstenhaber algebra, $A$ is an associative algebra and $h:G\rightarrow A$ is a central morphism of associative algebras.

\medskip

\subsection{Notation for the generators of $\hsc$} We use the same notation as in \cite[Corollary 3.2.2]{HoeLiv13}. The operad 
$\hsc$ is an operad generated by $f_2\in\hsc(2,0;\cl)_0$, $g_2\in\hsc(2,0;\cl)_1$, $e_{0,2}\in\hsc(0,2;\op)_0$ and 
$e_{1,0}\in\hsc(1,0;\op)_0$. The operation $f_2$ is the commutative product and the operation $g_2$ is the 
Lie product governing the Gerstenhaber structure. The operation $e_{0,2}$ is the associative product governing 
the associative 
structure, and the operation $e_{1,0}$ is the one governing the central morphism.

\subsection{Notation for operadic composition} Given a $\{\cl,\op\}$-colored operad $\mathcal P$ and $\alpha\in\mathcal P(n,m;x)$ and $\beta\in\mathcal P(r,s;\op)$
 the notation $\alpha\circ_i^\op \beta$ holds for the element  obtained by inserting $\beta$ at the $i$-th open input of $\alpha$. If $\beta\in\mathcal P(r,s;\cl)$, then $\alpha\circ_j^\cl \beta$ is  the element obtained by inserting $\beta$ at the $j$-th closed input of $\alpha$.
Note that $\alpha\circ_i^\op\beta\in \mathcal P(\underbrace{\cl,\ldots,\cl}_{n\text{ times}},\underbrace{\op,\ldots,\op}_{i-1 \text{ times}},\underbrace{\cl,\ldots,\cl}_{r\text{ times}},\underbrace{\op,\ldots,\op}_{s+m-i \text{ times}};x).$

\section{The singular chain complex of the two-dimensional Swiss-cheese operad}\label{S:singular2}

In this section, we  describe specific elements in the singular chain complex $\Sing$ of the Swiss-cheese operad.
For a cycle $x$ in a chain complex $C$, its image in $H_*(C)$ is denoted by $[x]$.


Let $a\in \SC(0,2;\op)$ be the configuration of the two semidiscs of radius $\frac{1}{2}$ whose 
center have respectively coordinates $(-\frac{1}{2},0)$ and 
$(\frac{1}{2},0)$. We denote also by $a$ the corresponding element in $\Sing(0,2;\op)_0.$
Let $f\in \SC(1,0;\op)$ be the configuration of the disc of radius $\frac{1}{2}$ whose center 
has coordinates $(0,\frac{1}{2})$, also seen as an element
in $\Sing(1,0;\op)_0.$

-
\begin{center}
\begin{tabular}{cp{2cm}c}
\includegraphics[height=2cm]{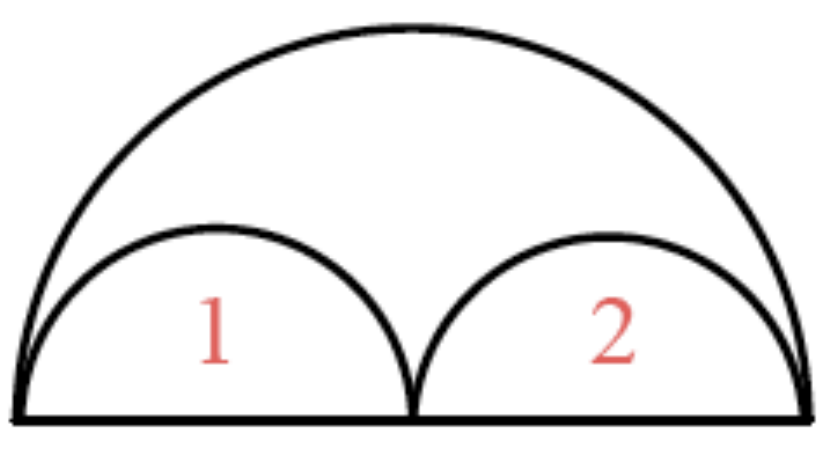}&\quad&
\includegraphics[height=2cm]{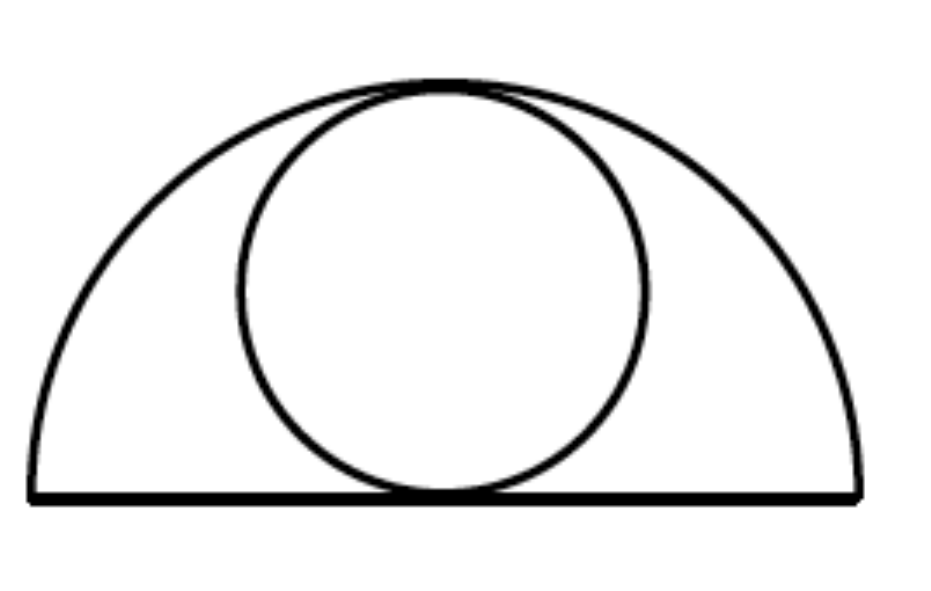}\\
$a\in \SC(0,2;\op)$ &\quad& $f\in \SC(1,0;\op)$ 
\end{tabular}
\end{center}

\medskip

Let $\eta_1\in \Sing(1,1;\op)_1$ be the path in $\SC(1,1;\op)$ 
from $a\circ_1^\op f$ to $(a\circ_2^\op f)\cdot (21)$ such that the center of the left disc runs 
along the circle of radius $\frac{1}{2}$ centered at $(0,\frac{1}{4})$ and the center of the right 
semidisc runs along the real line from the point of coordinates $(\frac{1}{2},0)$ to the point of 
coordinates $(-\frac{1}{2},0)$ as presented in the following picture:

\begin{center}
\begin{tabular}{c}
\includegraphics[height=3cm]{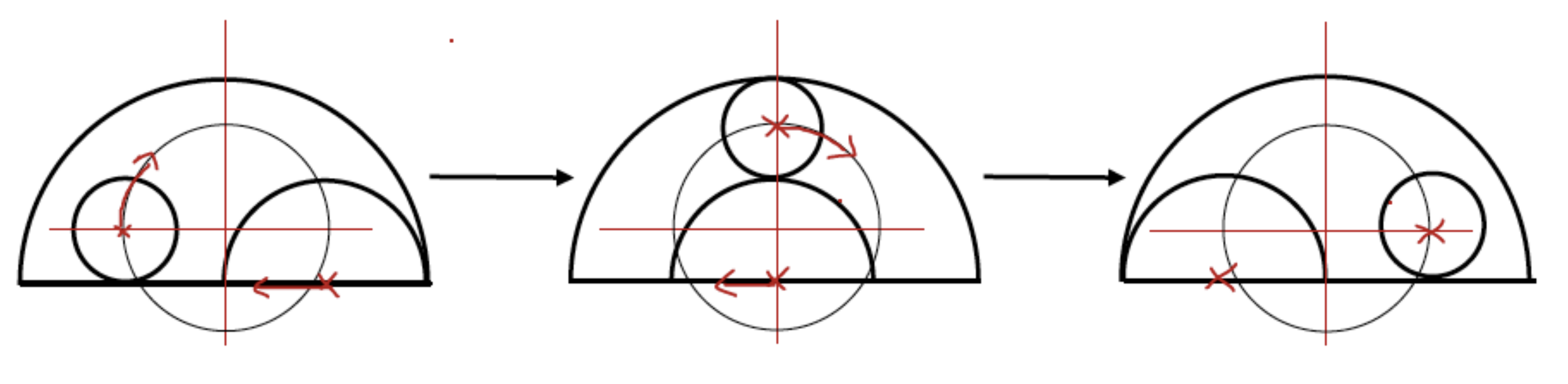}\\
\end{tabular}
\end{center}

\medskip
The path $\eta_1$ satisfies
\begin{equation}\label{eta1}
\partial\eta_1=(a\circ_2^\op f)\cdot (21)-a\circ_1^\op f.
\end{equation}

Let $l$ be the following loop in $\Sing(2,0;\cl)_1$: 

\begin{center}
\begin{tabular}{c}
\includegraphics[height=8cm]{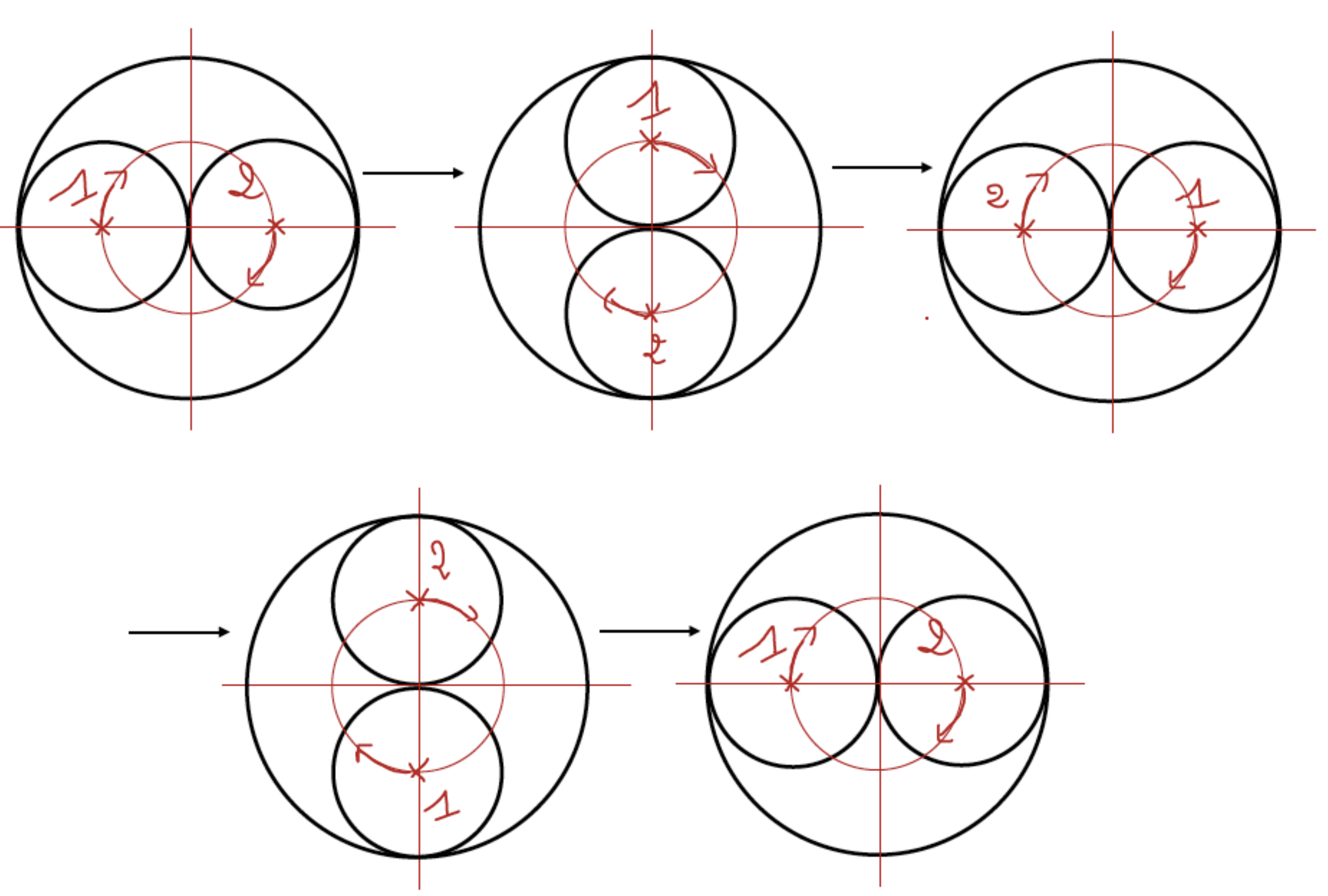}\\
{\sl The loop $l$}
\end{tabular}
\end{center}

\medskip

The chains $a,f,l$ are cycles.
The two dimensional vector space $\hsc(0,2;\op)_0$ is a free $S_2$-module generated by $[a]$. The one dimensional vector spaces 
$\hsc(1,0;\op)_0$ and $\hsc(2,0;\cl)_1$ are spanned respectively by $[f]$ and $[l]$.
Moreover  $[f\circ_1^{\cl} l]=[f]\circ_1^{\cl} [l]$ spans $\hsc(2,0;\op)_1$.

The composite of the paths  $\eta_1\circ_1^\op f$ and $(\eta_1\circ_1^\op f) \cdot (21)$ is a loop 
homotopic to the loop $f\circ_1^\cl l$, hence $[\eta_1\circ_1^\op f+  (\eta_1\circ_1^\op f) \cdot (21)]=[f\circ_1^\cl l]$ and
there exists $\eta_2\in\Sing(2,0;\op)_2$ such that
\begin{equation}\label{eta2}
\partial\eta_2=\eta_1\circ_1^\op f+  (\eta_1\circ_1^\op f) \cdot (21)-f\circ_1^\cl l.
\end{equation}

This relation is dubbed the Eye Law in  \cite{HoeLiv12}, a consequence of the eye shape of the 
compactification of the space of configurations of two points in the upper half plane obtained by Kontsevich in \cite{Kontse03}.

\begin{prop}\label{P:change} If $a'\in\Sing(0,2;\op)_0, f'\in\Sing(1,0;\op)_0$ and $l'\in\Sing(2,0;\cl)_1$ satisfy 
$[a']=[a], [f']=[f]$ and $[l']=[l]$, then there exist $\eta_1'\in \Sing(1,1;\op)_1$ 
and $\eta_2'\in\Sing(2,0;\op)_2$  such that

$$\begin{array}{lcclc}
(\ref{eta1})'&\partial\eta_1'&=&(a'\circ_2^\op f')\cdot (21)-a'\circ_1^\op f',  \\
(\ref{eta2})'& \partial\eta_2'&=&\eta_1'\circ_1^\op f'+  (\eta_1'\circ_1^\op f') \cdot (21)-f'\circ_1^\cl l'.
\end{array}$$
\end{prop}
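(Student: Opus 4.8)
The plan is to reduce the existence of $\eta_1'$ and $\eta_2'$ to two null-homology statements — one in degree $0$, one in degree $1$ — to dispose of the degree-zero one by connectivity, and to isolate the degree-one one as the actual content. Throughout I use that $\Sing$ is a dg operad, so every partial composition $\circ_i^\op,\circ_j^\cl$ is a chain map obeying the Leibniz rule and every $S_n$-action commutes with $\partial$; hence composites and symmetrizations of cycles are again cycles, and their classes depend only on the classes of the factors. For $\eta_1'$: the chain $(a'\circ_2^\op f')\cdot(21)-a'\circ_1^\op f'$ has degree $0$ and is therefore automatically a cycle, and $\eta_1'$ exists precisely when its class in $\hsc(1,1;\op)_0$ vanishes. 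Since $[a']=[a]$ and $[f']=[f]$, the induced operad structure on $\hsc$ gives $[(a'\circ_2^\op f')\cdot(21)-a'\circ_1^\op f']=[(a\circ_2^\op f)\cdot(21)-a\circ_1^\op f]=[\partial\eta_1]=0$; equivalently $a'\circ_1^\op f'$ and $(a'\circ_2^\op f')\cdot(21)$ lie in the same path-component of the connected space $\SC(1,1;\op)$. So $\eta_1'$ exists.

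For $\eta_2'$ I first check that the right-hand side of $(\ref{eta2})'$, which I abbreviate $Z'$, is a $1$-cycle. Using $\partial f'=\partial l'=0$ and the Leibniz rule, $\partial Z'$ collapses to $X'\circ_1^\op f'+(X'\circ_1^\op f')\cdot(21)$, where $X'=\partial\eta_1'=(a'\circ_2^\op f')\cdot(21)-a'\circ_1^\op f'$. Operad associativity and equivariance give the chain-level identity $((a'\circ_2^\op f')\cdot(21))\circ_1^\op f'=((a'\circ_1^\op f')\circ_1^\op f')\cdot(21)$: both sides are the configuration with two closed discs sitting in the two semidisc slots of $a'$, with the two labels interchanged. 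Writing $P'=a'\circ_1^\op f'$, this yields $X'\circ_1^\op f'=(P'\circ_1^\op f')\cdot(21)-P'\circ_1^\op f'$, whose $(21)$-symmetrization vanishes. This is the chain-level Eye Law and it gives $\partial Z'=0$; it remains to prove $[Z']=0$ in $\hsc(2,0;\op)_1$, after which $Z'$ bounds and $\eta_2'$ exists.

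For the class, write $Z'=Y'-f'\circ_1^\cl l'$ with $Y'=\eta_1'\circ_1^\op f'+(\eta_1'\circ_1^\op f')\cdot(21)$. Since $[f'\circ_1^\cl l']=[f]\circ_1^\cl[l]=[f\circ_1^\cl l]$ generates the space $\hsc(2,0;\op)_1$, it suffices to prove $[Y']=[f\circ_1^\cl l]$, that is $[Y']=[Y]$ for the unprimed $Y=\eta_1\circ_1^\op f+(\eta_1\circ_1^\op f)\cdot(21)$ — recall $[Y]=[f\circ_1^\cl l]$ is exactly the identity recorded just before $(\ref{eta2})$. Since $\SC(1,1;\op)$ is contractible (the disc position varies over a half-plane with a boundary half-disc removed and the semidisc position over an interval, both contractible) I may take $\eta_1'$ to be an honest path; then, the associativity identity above matching its two endpoints, $Y'$ is the class of the concatenated loop $\eta_1'\circ_1^\op f'$ followed by $(\eta_1'\circ_1^\op f')\cdot(21)$, based at $(a'\circ_1^\op f')\circ_1^\op f'$ in $\SC(2,0;\op)$. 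Its class in $H_1(\SC(2,0;\op))\cong\kfield$ is a winding number, a homotopy invariant.

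To equate $[Y']$ with $[Y]$ I deform the data: choose a path $a_s$ from $a'$ to $a$ inside the (one of two) component of $\SC(0,2;\op)$ singled out by $[a']=[a]$, a path $f_s$ from $f'$ to $f$ in $\SC(1,0;\op)$, and a continuously varying family of connecting paths $\eta_{1,s}$ — the latter supplied by a fixed contraction of $\SC(1,1;\op)$, which furnishes a canonical path between any two points depending continuously on them. This produces a homotopy of loops from $Y'$ to $Y$, so their winding numbers coincide and $[Y']=[Y]$. I expect this last step to be the main obstacle: the earlier steps are formal (Leibniz, equivariance, functoriality on homology), but $Y'$ is built from the non-cycle $\eta_1'$, so functoriality does not apply to it directly, and one must genuinely track the secondary loop through the change of representatives. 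The same point can be made algebraically — writing $a'-a=\partial A$, $f'-f=\partial F$, $l'-l=\partial L$ and expanding $Y'-Y$ by repeated use of Leibniz — where the difficulty concentrates in the non-exact term $\eta_1'\circ_1^\op\partial F$, which must be rewritten via $\partial\eta_1'=X'$ so that the Eye-Law symmetrization cancels the surviving non-exact contributions; the vanishing $\hsc(1,1;\op)_1=0$ (again contractibility of $\SC(1,1;\op)$) is what guarantees the residual cycle bounds.
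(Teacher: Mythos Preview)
Your reduction of the existence of $\eta_1'$ and $\eta_2'$ to the vanishing of two homology classes is correct and is a more conceptual packaging than the paper's proof, which simply writes $a'=a+\partial a_1$, $f'=f+\partial f_1$, $l'=l+\partial l_2$ and exhibits explicit chains
\[
\eta_1'=\eta_1+(a\circ_2^\op f_1)\cdot(21)-a\circ_1^\op f_1+(a_1\circ_2^\op f')\cdot(21)-a_1\circ_1^\op f',
\]
\[
\eta_2'=\eta_2-f\circ_1^\cl l_2-f_1\circ_1^\cl l'-\bigl(\eta_1\circ_1^\op f_1+a(f_1,f_1)\bigr)\cdot(\ide+(21)),
\]
checking the relations by direct computation. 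Your algebraic sketch at the end (``expand $Y'-Y$ by repeated Leibniz'') is exactly this computation, so on that branch the two proofs coincide.

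The genuine issue is with your topological argument for $[Y']=[Y]$. You write ``choose a path $a_s$ from $a'$ to $a$'' and ``a path $f_s$ from $f'$ to $f$'', and then build a homotopy of loops. This tacitly treats $a'\in\Sing(0,2;\op)_0$ and $f'\in\Sing(1,0;\op)_0$ as single $0$-simplices, i.e.\ actual points of the configuration spaces. But the hypothesis only says they are $0$-\emph{chains} with $[a']=[a]$, $[f']=[f]$; for instance $f'$ could be $2p-q$ with $p,q\in\SC(1,0;\op)$. In that generality $\eta_1'\circ_1^\op f'$ is not a path, $Y'$ is not a concatenated loop, and the winding-number argument does not apply as stated. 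This matters for the intended application (Theorem~\ref{T:main2}), where $a',f',l'$ arise as $\gamma(a^\P),\gamma(f^\P),\gamma(l^\P)$ and there is no reason for these to be single simplices. So the topological branch, as written, does not cover the statement; the argument is completed only by falling back on the algebraic expansion you sketch, which is the paper's proof. Your framing in terms of obstruction classes is nonetheless valuable: it explains \emph{why} the explicit formulas exist, and anticipates the Massey-product viewpoint of Section~5.
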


\begin{proof} There exist $a_1,f_1,l_2$ such that
$a'=a+\partial a_1,
f'=f+\partial f_1 
\text{ and } 
l'=l+\partial l_2.$
The elements
\begin{align*}
\eta_1'=&\eta_1+(a\circ_2^\op f_1)\cdot (21)-a\circ_1^\op f_1+(a_1\circ_2^\op f')\cdot(21)-a_1\circ_1^\op f', \\
\eta_2'=&\eta_2-f\circ_1^\cl l_2-f_1\circ_1^\cl l'-(\eta_1\circ_1^\op f_1+a(f_1,f_1))\cdot(\textrm{id}+(21)),
\end{align*}
satisfy the prescribed relations.
\end{proof}

\section{Non-formality of the two-dimensional Swiss-cheese operad $\SC$}

\begin{thm}\label{T:main2} The two-dimensional Swiss-cheese operad is not formal.
\end{thm}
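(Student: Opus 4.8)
The plan is to prove non-formality by exhibiting a nontrivial Massey-type operadic product, built from the chains $a$, $f$, $l$ and the primitives $\eta_1$, $\eta_2$ constructed above, and showing it obstructs the existence of a quasi-isomorphism of operads between $\Sing$ and its homology $\hsc$. The guiding principle is the standard one for detecting non-formality: if $\SC$ were formal, there would be a zig-zag of operadic quasi-isomorphisms connecting $\Sing$ to $\hsc$, and such a zig-zag forces all higher Massey operadic products to vanish (more precisely, to be expressible in terms of the operad structure on homology alone, hence to lie in the indeterminacy). So the heart of the argument is to define an appropriate triple Massey operadic product whose defining relations are precisely the Eye Law \eqref{eta2} together with \eqref{eta1}, and to verify that the resulting homology class in $\hsc$ is nonzero while its indeterminacy does not contain it.

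First I would set up the Massey operadic product for a suitable triple of homology classes whose pairwise operadic composites vanish in $\hsc$ but whose chain-level representatives admit the primitives $\eta_1,\eta_2$. The classes involved are $[a]\in\hsc(0,2;\op)_0$, $[f]\in\hsc(1,0;\op)_0$ and $[l]\in\hsc(2,0;\cl)_1$; the relation \eqref{eta1} witnesses that a symmetrized composite of $a$ and $f$ is a boundary, and the Eye Law \eqref{eta2} witnesses that the secondary composite built from $\eta_1$, $f$ and the composite $f\circ_1^\cl l$ is again a boundary, so that $\eta_2$ exists. The Massey product is then read off as the homology class of a suitable combination $\eta_2\circ\,(\cdots)\pm(\cdots)\circ\eta_2$ assembled so as to be a cycle in a higher arity, landing in some $\hsc(\,\cdot\,;\op)$ component. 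Proposition~\ref{P:change} is exactly the ingredient that makes this well-defined: it shows that replacing the representatives $a,f,l$ by any other representatives $a',f',l'$ of the same homology classes only changes $\eta_1,\eta_2$ in a controlled way, so the induced change in the Massey product lies in the indeterminacy subspace. Thus I would first use Proposition~\ref{P:change} to argue well-definedness modulo indeterminacy, and then pin down the indeterminacy explicitly in terms of operadic composites of the generators $f_2,g_2,e_{0,2},e_{1,0}$.

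The main computation is to identify the resulting class in $\hsc$ and show it is nonzero and outside the indeterminacy. For this I would use the explicit description of $\hsc$ from \cite{HoeLiv13}: its components are free modules on composites of the four generators, so I can test the candidate Massey class against this basis. Concretely, I expect the Massey product to land in a component such as $\hsc(3,0;\op)$ or $\hsc(2,1;\op)$ in an appropriate degree, and to equal (up to sign and indeterminacy) an iterated composite involving the Lie bracket $g_2$ paired with the central morphism $e_{1,0}$, which is not realizable as a single operadic composite of the classes $[a],[f],[l]$ — this nonrealizability is what forces it to be a genuine secondary operation. Comparing against the free-module basis then shows the class is not in the image of the primary operadic operations, hence nonzero modulo indeterminacy.

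The hard part will be the geometric-topological verification that the assembled higher chain is indeed a cycle and that its homology class is nonzero — in other words, controlling the signs and the symmetric-group actions in the iterated compositions $\eta_2\circ_i^\op f$, $\eta_1\circ_i^\op\eta_1$ and their transposes, and confirming that no cancellation occurs. This amounts to a careful bookkeeping of the configuration-space geometry underlying the Eye Law, and to checking that the indeterminacy, computed from $(\ref{eta1})'$ and $(\ref{eta2})'$ via Proposition~\ref{P:change}, genuinely misses the target class rather than absorbing it. Once that nonvanishing is established, the formality of any operad forces Massey operadic products to vanish modulo indeterminacy, so the existence of this nonzero Massey product contradicts formality and proves the theorem; I would defer the general operadic Massey-product formalism and its vanishing-under-formality statement to the final section of the paper, invoking it here as a black box.
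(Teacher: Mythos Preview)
Your overall strategy --- detect a nonvanishing Massey operadic product and invoke the vanishing-under-formality principle --- is exactly the paper's. But you have misidentified \emph{which} Massey product does the job, and this leads you to look in the wrong place.

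The relevant Massey product is $\langle [a],[f],[f]\rangle$ of type~II, and it lives in $\hsc(2,0;\op)_1$, not in $\hsc(3,0;\op)$ or $\hsc(2,1;\op)$. It is built from $\eta_1$, not from $\eta_2$: since $\partial\eta_1=(a\circ_2^\op f)\cdot(21)-a\circ_1^\op f$, the combination
\[
\eta_1\circ_1^\op f+(\eta_1\circ_1^\op f)\cdot(21)
\]
is a cycle in $\Sing(2,0;\op)_1$, and its homology class \emph{is} the Massey product. The Eye Law~\eqref{eta2} is not a further primitive to be fed into a higher Massey product; it is the \emph{computation} of this one, showing it equals $[f\circ_1^\cl l]\neq 0$. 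The indeterminacy vanishes outright because $\hsc(1,1;\op)$ is concentrated in degree~$0$, so there is no coset ambiguity to control. Your proposed iterated compositions $\eta_2\circ_i^\op f$ and $\eta_1\circ_i^\op\eta_1$ are unnecessary and would take you into components where the bookkeeping is harder and the answer less transparent.

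The paper's direct argument avoids invoking the general Massey formalism at all: it lifts $a,f,l$ to cycles $a^\P,f^\P,l^\P$ in an interpolating operad $\P$, constructs $\eta_1^\P$ there, and observes that the resulting cycle must equal $\lambda\,f^\P\circ_1^\cl l^\P$ modulo boundaries. Applying $\gamma$ and Proposition~\ref{P:change} forces $\lambda=1$; applying $\psi$ kills $\eta_1^\P$ for degree reasons (since $\hsc(1,1;\op)_1=0$), yielding $e_{1,0}\circ_1^\cl g_2=0$ in $\hsc$, a contradiction. This is the same obstruction you are after, just unpacked by hand.
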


\begin{proof}

We will assume by contradiction that there is a dg operad $\P$ together with a zig-zag of operad quasi-isomorphisms
$$\xymatrix{ & \P \ar[dl]_{\gamma}\ar[dr]^{\psi} & \\
\Sing & &\hsc}$$

Because $H_*(\gamma)$ is an isomorphism, there
are cycles $a^\P,f^\P,l^\P \in \P$ such that $[\gamma(x^\P)]=[x],\forall x\in\{a,f,l\}.$  
There exist $a',f',l'$ as in Proposition \ref{P:change} such that $\gamma(x^\P)=x' ,\forall x\in\{a,f,l\}.$  
By Relation (\ref{eta1})
$$H_*(\gamma)(([a^\P]\circ_2^\op [f^\P])\cdot (21)-[a^\P]\circ_1^\op [f^\P])=0,$$ 
and  there exists $\eta_1^\P\in\P(1,1;\op)_1$ such that 
\begin{equation}\label{eta1P}
\partial\eta_1^\P=(a^\P\circ_2^\op f^\P)\cdot (21)-a^\P\circ_1^\op f^\P.
\end{equation}
Applying $\gamma$  and Proposition \ref{P:change} we have
$$\partial\gamma(\eta_1^\P)=\gamma(\partial\eta_1^\P)=(a'\circ_2^\op f')\cdot (21)-a'\circ_1^\op f'=\partial \eta_1' .$$
Since $\hsc(1,1;\op)$ is concentrated in degree $0$, there exists $x_2\in \Sing(1,1;\op)_2$ such that
$$\partial x_2=\gamma(\eta_1^\P)- \eta_1'.$$

The following relation is a consequence of Relation (\ref{eta1P}) and the operadic relations in $\P$:
$$\partial(\eta_1^\P\circ_1^\op f^\P+(\eta_1^\P\circ_1^\op f^\P)\cdot(21))=0\in\P(2,0;\op)_0.$$
The homology group $\hsc(2,0;\op)_1$ is one-dimensional, spanned by $[f\circ_1^\cl l]=[f'\circ_1^\cl l']$. It implies that 
$H_1(\P(2,0;\op))$ is spanned by $[f^\P\circ_1^\cl l^\P]$ and that there exists $\eta_2^\P\in\P(2,0;\op)_2$ and $\lambda\in\kfield$ such that

\begin{equation}\label{eta2P}
\eta_1^\P\circ_1^\op f^\P+(\eta_1^\P\circ_1^\op f^\P)\cdot(21)=\lambda (f^\P\circ_1^\cl l^\P)+\partial\eta_2^\P.
\end{equation}
Applying $\gamma$ to the above relation gives
$$(\eta_1'+\partial x_2)\circ_1^\op f'+((\eta_1'+\partial x_2)\circ_1^\op f')\cdot(21)=\lambda (f'\circ_1^\cl l')+\partial\gamma(\eta_2^\P),$$
which, together with Relation (\ref{eta2}') gives 

$$\eta_1'\circ_1^\op f'+(\eta_1'\circ_1^\op f')\cdot(21)=\lambda (f'\circ_1^\cl l')+\partial x'_2=f'\circ_1^\cl l'+\partial\eta_2'.$$
The homology class $[f'\circ_1^\cl l']$ spans $\hsc(2,0;\op)_1$, so $\lambda=1$.

For degree reasons $\psi(\eta_1^\P)=0=\psi(\eta_2^\P)$.  Since $\psi$ is a quasi-isomorphism of operads, there exist $\lambda_f,\lambda_l\in \kfield^*$ such that $\psi(f^\P)=\lambda_f e_{1,0}$ and $\psi(l^\P)=\lambda_l g_2$. Applying $\psi$ to Relation (\ref{eta2P}) gives
$$\lambda\lambda_f\lambda_l\; e_{1,0}\circ_1^\cl g_2=0,$$
a contradiction. 
\end{proof}

\section{Non-formality of the Swiss-cheese operad}

In this section we consider the $d$-dimensional Swiss-cheese operad $\SC_d$ where $d\geq 3$.  
The proof of the non-formality of $\SC_d$ is similar to the case $d=2$, but there are some differences. 
First of all, we will consider in this section a field $\kfield$ of characteristic different from $2$. Note that in case the field is of characteristic 2, then
the operad $\SC_d$ is not formal because $\SC_d(\cl,\cl;\cl)$ is not formal as an $S_2$-module.


\subsection{Notation} The ambient space is $\R^d$, the sphere   $S^{d-1}$ is the subspace of $\R^d$ of points of norm $1$. The sphere of dimension $d-2$ is seen as the following subspace of $S^{d-1}\subset\R^d$: 
$$S^{d-2}=\{x=(x_1,\ldots,x_d)\in S^{d-1}| x_d=0 \}.$$
The ball of dimension $d-1$ is identified with the subspace of $S^{d-1}$
$$D^{d-1}=\{x=(x_1,\ldots,x_d)\in S^{d-1}| x_d\geq 0 \}.$$

For $x\in \R^d$ and $r>0$,  the (open) ball centered at $x$ of radius $r$ is denoted by
$B(x;r)$.  If $x_d=0$, the semiball centered at $x$ of radius $r$ is
$$B^+(x;r)=\{y\in \R^d| ||y-x||<r, y_d>0\}.$$

We will use the notation $P_\frac{1}{4}$ for the point of coordinates $(0,\ldots,0,\frac{1}{4})$.

Let $r_i$ be the reflection in the hyperplane $H_i=\{x\in\R^d| x_i=0)$ and let $R_{d-1}$ be the composite of the $d-1$ 
reflections $r_1,\ldots,r_{d-1}$.  For a point $x\in\R^d$ we denote by $x\cdot r_i$ the image of $x$ by the reflection $r_i$.

\subsection{The $d$-dimensional Swiss-cheese operad} Similar to the case $d=2$,
the topological space $\SC_d(n,m;\cl)$ is the empty space if $m>0$ or $n=0$ and the configuration space of $n$ nonoverlapping balls  in the unit  ball in 
$\R^d$ otherwise. The topological space $\SC_d(n,m;\op)$ is the empty space if $n=m=0$ and otherwise the configuration space of  
nonoverlapping $n$ balls and $m$ semiballs  in the unit  semiball $B^+(0;1)$ in $\R^d$.

\subsection{The singular chain complex of the $d$-dimensional Swiss-cheese operad}

In this section, as in section \ref{S:singular2}, we provide specific elements in the singular chain complex 
$\Sing_d$ of the $d$-dimensional Swiss-cheese operad. Although the technics used are similar to the case $d=2$, 
there is a main difference that we explain now.

The spaces $\SC_d(1,0;\op)$ and $\SC_d(1,1;\op)$ are contractible. 
The space 
$\SC_d(0,2;\op)$ is homotopy equivalent to $S^{d-2}$ and the spaces $\SC_d(2,0;\cl)$ and $\SC_d(2,0;\op)$ are 
homotopy equivalent to $S^{d-1}$. An algebra over the operad $\hsc_d$ is a triple $(G_1,G_2,h)$ where $G_1$ is a
$d$-Gerstenhaber algebra, $G_2$ is a $(d-1)$-Gerstenhaber algebra and $h:G_1\rightarrow G_2$ is a map of 
commutative algebras, such that
$[h(a),b]_{d-1}=0,\forall a\in G_1, b\in G_2$. Recall that for $d\geq 2$, a $d$-Gerstenhaber algebra is a vector space 
endowed with a commutative product and a Lie bracket of degree $d-1$ which is a derivation with respect to
the product.

For $d=2$, algebras over $\hsc$ are triples $(G,A,h)$ where 
$A$ is an associative algebra and $h:G\rightarrow A$ is a central morphism.  
We should interpret the notion of central morphism as 
$[h(a),b]=0$, where the bracket is the one obtained by antisymmetrizing the associative product.

The main diference between the cases $d=2$  and $d>2$ is based on the above remark.
In this section, we will pick a generator of the bracket 
$[-,-]_{d-1}$ instead of picking a generator of the associative product (compare Relations (\ref{eta1}) and (\ref{etad})).

\begin{prop} There exist cycles $f\in \Sing_d(1,0;\op)_0$, $a\in \Sing_d(0,2;\op)_{d-2}$ and 
$l\in \Sing_d(2,0;\cl)_{d-1}$ such that
$[a]$, $[f]$ and $[l]$ span their respective homology groups, and such that
$a\cdot (21)=(-1)^{d-1} a$ and $l\cdot (21)=(-1)^{d} l$. 
There exist  $\eta\in\Sing_d(1,1;\op)_{d-1}$ and $\nu\in\Sing(2,0;\op)_d$ satisfying
\begin{align}\label{etad}
\partial\eta&=a\circ_1^\op f,\\
\partial\nu&=\eta\circ_1^\op f-(-1)^{d-1} (\eta\circ_1^\op f)\cdot (21)-f\circ_1^\cl l. \label{reld}
\end{align}
 \end{prop}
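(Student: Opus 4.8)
The plan is to mimic the $d=2$ construction from Section~\ref{S:singular2}, replacing the one-dimensional paths by higher-dimensional chains dictated by the homotopy types recorded above. First I would fix explicit geometric representatives. For $f\in\Sing_d(1,0;\op)_0$ I take the single ball $B(P_\frac14;\frac12)$, a point in the contractible space $\SC_d(1,0;\op)$, so $[f]$ spans $H_0$. For $a\in\Sing_d(0,2;\op)_{d-2}$ I want a cycle representing the fundamental class of $S^{d-2}\simeq\SC_d(0,2;\op)$; concretely I would take two semiballs and let the configuration sweep out a $(d-2)$-sphere by moving the centers along $S^{d-2}\subset\R^d$ (rotating one semiball around the other within the real hyperplane $\{x_d=0\}$), choosing the orientation so that the antipodal/swap symmetry gives $a\cdot(21)=(-1)^{d-1}a$. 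Similarly $l\in\Sing_d(2,0;\cl)_{d-1}$ represents the fundamental class of $S^{d-1}\simeq\SC_d(2,0;\cl)$, obtained by letting one ball orbit the other over $S^{d-1}$, oriented so that $l\cdot(21)=(-1)^d l$; the sign is forced because swapping the two balls is the antipodal map on $S^{d-1}$, which has degree $(-1)^d$, while for $a$ the swap acts on $S^{d-2}$ with degree $(-1)^{d-1}$.

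Next I would produce $\eta$ satisfying $\partial\eta=a\circ_1^\op f$. The key point is that $a\circ_1^\op f\in\Sing_d(1,1;\op)_{d-2}$ is a cycle living in the contractible space $\SC_d(1,1;\op)$, so its homology class vanishes and a bounding chain $\eta\in\Sing_d(1,1;\op)_{d-1}$ exists by contractibility. (This is exactly the analogue of~(\ref{eta1}), but now the right-hand side is a single term rather than a difference, because we are using the bracket generator rather than the associative product; compare the discussion preceding the statement.) To get $\nu$ I consider the chain
$$w:=\eta\circ_1^\op f-(-1)^{d-1}(\eta\circ_1^\op f)\cdot(21)-f\circ_1^\cl l\in\Sing_d(2,0;\op)_{d-1}.$$
A direct boundary computation using $\partial\eta=a\circ_1^\op f$, the operadic compatibility of $\partial$ with $\circ_i$, the symmetry $a\cdot(21)=(-1)^{d-1}a$, and $\partial f=\partial l=0$ shows $\partial w=0$, so $w$ is a cycle. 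I would then argue that $[w]=0$ in $H_{d-1}(\SC_d(2,0;\op))$: since $\SC_d(2,0;\op)\simeq S^{d-1}$ has $H_{d-1}$ spanned by $[f\circ_1^\cl l]$, it suffices to check that the first two composite terms $\eta\circ_1^\op f\pm(\eta\circ_1^\op f)\cdot(21)$ represent exactly the same class as $f\circ_1^\cl l$ (the symmetrized image of the generator), which is the $d$-dimensional Eye Law—the analogue of~(\ref{eta2})—coming from the eye-shaped Kontsevich compactification of two points in the upper half space. Once $[w]=0$, a bounding chain $\nu\in\Sing(2,0;\op)_d$ with $\partial\nu=w$ exists, giving~(\ref{reld}).

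The main obstacle is the geometric/degree-theoretic verification that $[w]=0$, i.e. establishing the correct $d$-dimensional Eye Law with the right sign. The sign $(-1)^{d-1}$ in front of $(\eta\circ_1^\op f)\cdot(21)$ is precisely what is needed so that the symmetrization of $\eta\circ_1^\op f$ matches the symmetry type of $l$ (whose swap sign is $(-1)^d$) after composing $f\circ_1^\cl l$; getting all these orientation conventions consistent—the degree of the antipodal swap on each sphere, the Koszul signs in $\partial(\eta\circ_1^\op f)$, and the induced orientation on the eye compactification—is the delicate part. I would pin these down by choosing the orientations of the representatives of $a$, $l$ compatibly at the outset and then tracking signs through a single careful boundary computation, so that the only nontrivial topological input is the identification of the generator of $H_{d-1}(S^{d-1})\cong H_{d-1}(\SC_d(2,0;\op))$ via the Eye Law, exactly as in the $d=2$ case.
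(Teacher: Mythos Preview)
Your overall strategy matches the paper's: construct $f,a,l$ with the stated symmetry, obtain $\eta$ as a bounding chain for $a\circ_1^\op f$, check that $w=\eta\circ_1^\op f-(-1)^{d-1}(\eta\circ_1^\op f)\cdot(21)-f\circ_1^\cl l$ is a cycle, and show $[w]=0$. The weak spot is the last step. You obtain $\eta$ abstractly from contractibility of $\SC_d(1,1;\op)$ and then defer the verification of $[w]=0$ to a ``$d$-dimensional Eye Law,'' but an arbitrary bounding chain $\eta$ carries no geometric information, so you have no way to compute the class $[\eta\circ_1^\op f-(-1)^{d-1}(\eta\circ_1^\op f)\cdot(21)]$. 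The content of the proposition is precisely that this class is a \emph{nonzero} multiple of $[f\circ_1^\cl l]$ (after which one rescales $l$); your sign-tracking plan cannot establish nonvanishing, which is a degree computation, not a sign computation.

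The paper supplies the missing mechanism in two moves. First, it does not take $\eta$ arbitrary: it builds an explicit filling $\psi_\op:D^{d-1}\to\SC_d(1,1;\op)$ of the embedded sphere $\varphi_\op\circ_1^\op f:S^{d-2}\hookrightarrow\SC_d(1,1;\op)$ and sets $\eta=\psi_\op(u_{d-1})$ up to a correction in the image of $\varphi_\op(f,f)$. This makes $\eta\circ_1^\op f$ geometrically visible: under the homotopy equivalence $\pi_\cl:\SC_d(2,0;\op)\to S^{d-1}$ it becomes the upper hemisphere embedding $J^+$. Second, the paper does not directly compare with $(\eta\circ_1^\op f)\cdot(21)$; it first compares with $(\eta\circ_1^\op f)\cdot(21)\cdot R_{d-1}$ (where $R_{d-1}=r_1\cdots r_{d-1}$), which by a one-line calculation corresponds to $J^+\cdot r_d$, i.e.\ the lower hemisphere. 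Upper minus lower hemisphere visibly generates $H_{d-1}(S^{d-1})$, proving nonvanishing. Only then is the spurious $R_{d-1}$ removed by a separate symmetry argument: the discrepancy class $A$ satisfies both $A\cdot R_{d-1}=(-1)^{d-1}A$ (from the action on $\SC_d(2,0;\op)$) and $A\cdot R_{d-1}=(-1)^dA$ (by direct inspection), whence $2A=0$ and $A=0$ since $\mathrm{char}\,\kfield\neq 2$. This last step is where the characteristic hypothesis enters, and it is invisible in your outline.
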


\begin{proof} The proof is divided into four steps.

\medskip
\noindent{\sl i) Construction of $f$ and $l$}.
The space $\SC_d(1,0;\op)$ is contractible and $f$ denotes either the configuration 
$B((0,\ldots,0,\frac{1}{2});\frac{1}{2})$ in $B^+(0;1)$ or the associated
$0$-singular chain.

The space $\SC_d(2,0;\cl)$ is homotopy equivalent to $S^{d-1}$. Thus, there is a cycle 
$\tilde l\in \Sing_d(2,0;\cl)_{d-1}$ such that $[\tilde l]$
spans $\hsc_d(2,0;\cl)_{d-1}$. The action of the symmetric group corresponds to the antipodal map, so 
$[\tilde l\cdot (21)]=(-1)^d[\tilde l]$. Replacing $\tilde l$ by 
$l=\frac{1}{2} (\tilde l+(-1)^d \tilde l\cdot (21))$ gives a cycle satisfying the required conditions.
Note that $[f\circ_1^\cl l]$ spans $\hsc_d(2,0;\op)_{d-1}$.

\medskip
\noindent{\sl ii) Construction of $a$}.
The space $\SC_d(0,2;\op)$ is  homotopy equivalent to $S^{d-2}$ and a homotopy equivalence is given by the map
$$\begin{array}{cccc}
\pi_\op:& \SC_d(0,2;\op)&\rightarrow& S^{d-2} \\
& \{B^+(x;r),B^+(y;s)\}&\mapsto& \frac{(x-y)}{||x-y||}
\end{array}$$
The map 
$$\begin{array}{cccc}
\varphi_\op:&S^{d-2} &\rightarrow & \SC_d(0,2;\op) \\
&x&\mapsto &  \{B^+(\frac{x}{2};\frac{1}{2}),B^+(-\frac{x}{2};\frac{1}{2})\}
\end{array}$$
is a homeomorphism onto its image.
The composition $\pi_\op\varphi_\op$ is the identity, hence $\varphi_\op$ induces a quasi-isomorphism between 
the singular chain complexes. Let $e_{d-2}$ be a singular chain such that $[e_{d-2}]$ is a generator 
of $H_{d-2}(S^{d-2})$ and let $a=\varphi_\op(e_{d-2})$. The homology class  $[a]$ spans $\hsc_d(0,2;\op)_{d-2}$.
The action of the antipodal map on $S^{d-2}$ corresponds to the action of the symmetric group on $\SC_d(0,2;\op)$
via the map $\varphi_\op$, hence
we can choose $e_{d-2}$ so that $a\cdot (21)=(-1)^{d-1} a$.

\medskip

\noindent{\sl iii) Construction of $\eta$.}
The composition
$$\begin{array}{cccc}
\varphi_\op\circ_1^\op f :&S^{d-2} &\rightarrow & \SC_d(1,1;\op) \\
&x&\mapsto &  \{B(\frac{x}{2}+P_{\frac{1}{4}};\frac{1}{4}),B^+(-\frac{x}{2};\frac{1}{2})\}
\end{array}$$
is a homeomorphism onto its image $Y_\op$. Thus the homology class $[a\circ_1^\op f]$ spans $H_{d-2}(Y_\op)$.
The map $\varphi_\op\circ_1^\op f$ has a lifting
 $$\begin{array}{cccc}
\psi_\op :&D^{d-1} &\rightarrow & \SC_d(1,1;\op) \\
&x=(x_1,\ldots,x_{d-1},x_d\geq 0)&\mapsto &  \{B(\frac{x}{2}+P_{\frac{1}{4}};\frac{1}{4}),
B^+((-\frac{x_1}{2},\ldots,-\frac{x_{d-1}}{2},0);\frac{1}{2})\}
\end{array}$$
and the following diagram is commutative
$$\xymatrix{
H_{d-1}(D^{d-1},S^{d-2})\ar[r]^>>>>>\partial\ar[d]_{H_{d-1}(\psi_\op)}& 
H_{d-2}(S^{d-2})\ar[d]^{H_{d-2}(\varphi_\op\circ_1^\op f)}\\
H_{d-1}(\SC_d(1,1;\op),Y_\op)\ar[r]_<<<<<{\partial}&H_{d-2}(Y_\op)}$$
The horizontal maps are pieces of the homology long exact sequence associated to a pair of spaces and since 
$D^{d-1}$ and $\SC_d(1,1;\op)$ are contractible they are isomorphisms. The map $\varphi_\op\circ_1^\op f$ 
is a homeomorphism from $S^{d-2}$ to $Y_\op$ hence $H_{d-2}(\varphi_\op\circ_1^\op f)$ 
is an isomorphism. As a consequence, the left map in the diagram is an isomorphism.
If $u_{d-1}\in C_{d-1}(D^{d-1})$ is such that $[\partial(u_{d-1})]=[e_{d-2}]$ in $H_{d-2}(S^{d-2})$,
then $[\partial\psi_\op(u_{d-1})]=[a\circ_1^\op f]$. As a conclusion, there exists  
$\eta_{d-1}=\psi_\op(u_{d-1})\in \Sing_{d}(1,1;\op)_{d-1}$ such that 
\begin{equation*}
[\partial \eta_{d-1}]=[a\circ_1^\op f],
\end{equation*}
and there exists $x_{d-1}\in C_{d-1}(Y_\op)$ such that $\partial\eta_{d-1}-\partial x_{d-1}=a\circ_1^\op f$.
Hence the element $\eta=\eta_{d-1}-x_{d-1}$ satisfies Relation (\ref{etad}).

\medskip
\noindent{\sl iv) Proof of Relation (\ref{reld}).} Consider the composition
$$\begin{array}{cccc}
\psi_\op\circ_1^\op f :&D^{d-1} &\rightarrow & \SC_d(2,0;\op) \\
&x&\mapsto &  \{B(\frac{x}{2}+P_{\frac{1}{4}};\frac{1}{4}),B((-\frac{x_1}{2},\ldots,-\frac{x_{d-1}}{2},\frac{1}{4});
\frac{1}{4})\}.
\end{array}$$
Its restriction to $S^{d-2}$ is the map 
$$\begin{array}{cccc}
\varphi_\op(f,f) :&S^{d-2} &\rightarrow & \SC_d(2,0;\op) \\
&x&\mapsto & \{B(\frac{x}{2}+P_{\frac{1}{4}};\frac{1}{4}),B(-\frac{x}{2}+P_{\frac{1}{4}};\frac{1}{4})\}.
\end{array}$$
Recall that $R_{d-1}$ denotes the composite of the $d-1$ 
reflections $r_1,\ldots,r_{d-1}$. Note that
\begin{equation}\label{E:invariance}
\varphi_\op(f,f)\cdot (21)\cdot R_{d-1}=\varphi_\op(f,f).
\end{equation}

The projection
$$\begin{array}{cccc}
\pi_\cl :&\SC_d(2,0;\op)  &\rightarrow & S^{d-1} \\
& \{B(x;r),B(y;s)\}&\mapsto& \frac{(x-y)}{||x-y||}
\end{array}$$
is a homotopy equivalence, and we denote by $J^+:D^{d-1}\rightarrow S^{d-1}$ the composite 
$\pi_\cl\cdot (\psi_\op\circ_1^\op f)$. The embedding  $J^+$ maps $x=(x_1,\ldots,x_{d-1},x_d\geq 0)$
to $\frac{x'}{||x'||}$, where $x'=(x_1,\ldots,x_{d-1},\frac{x_d}{2})$, and
\begin{equation}\label{E:Jplus}
   \pi_\cl\cdot ((\psi_\op\circ_1^\op f)\cdot (21)\cdot R_{d-1}) =J^+\cdot r_d
  \end{equation}

Consider the element $u_{d-1}\in C_{d-1}(D^{d-1})$ defined in step iii).
The homology class
$[J^+(u_{d-1})-(J^+\cdot r_d)(u_{d-1})]$ spans $H_{d-1}(S^{d-1})$. Thus, the homology class
$[\eta_{d-1}\circ_1^\op f-(\eta_{d-1}\circ_1^\op f)\cdot (21)\cdot R_{d-1}]$ spans $\hsc_{d}(2,0;\op)_{d-1}$,
which is also spanned by $[f\circ_1^\cl l]$. Multiplying $l$ by a scalar $\lambda\not=0\in\kfield$, we can assume
\begin{equation*}
[\eta_{d-1}\circ_1^\op f-(\eta_{d-1}\circ_1^\op f)\cdot (21)\cdot R_{d-1}]=[f\circ_1^\op l].
\end{equation*}

If  $x\in C_{d-1}(Y_\op)$, then $x\circ_1^\op f-(x\circ_1^\op f)\cdot (21)\cdot R_{d-1}=0$ by Relation (\ref{E:invariance}).
As a consequence, since $\eta=\eta_{d-1}-x_{d-1}$, with $x_{d-1}\in C_{d-1}(Y_\op)$,
\begin{equation}\label{R:etafirst}
[\eta\circ_1^\op f-(\eta\circ_1^\op f)\cdot (21)\cdot R_{d-1}]=[f\circ_1^\op l].
\end{equation}
In addition,  Relation (\ref{etad}) and Equality $a\cdot(21)=(-1)^{d-1}a$ imply
\begin{equation*}
\partial(\eta\circ_1^\op f-(-1)^{d-1}(\eta\circ_1^\op f)\cdot (21))=a(f,f)-(-1)^{d-1}a\cdot (21)(f,f)=0.
\end{equation*}

Hence $\eta\circ_1^\op f-(-1)^{d-1}(\eta\circ_1^\op f)\cdot (21)$ is a cycle, and so is
$(\eta\circ_1^\op f)\cdot (21)\cdot R_{d-1}-(-1)^{d-1}(\eta\circ_1^\op f)\cdot (21)$. Let $A$ denote the homology class of the latter cycle.
On the one hand, the space $\SC(2,0;\op)$ is invariant under the action of $R_{d-1}$, 
and the homology of this action is the multiplication by $(-1)^{d-1}$, so that
$A\cdot R_{d-1}=(-1)^{d-1}A$. On the other hand,

$$A\cdot R_{d-1}=[(\eta\circ_1^\op f)\cdot (21)-(-1)^{d-1}(\eta\circ_1^\op f)\cdot (21)\cdot R_{d-1}]=(-1)^d A.$$

As a conclusion $2A=0$, hence $A=0$, because the field is of characteristic different from 2. Together  with Relation (\ref{R:etafirst}) it implies that
\begin{equation*}
[\eta\circ_1^\op f-(-1)^{d-1}(\eta\circ_1^\op f)\cdot (21)]=[f\circ_1^\op l].
\end{equation*}

Hence there exists $\nu$ satisfying 
Relation (\ref{reld}). \end{proof}

The next proposition is analogous to Proposition \ref{P:change} which states that Relations (\ref{etad}) and
(\ref{reld}) do not depend on the generators. 

\begin{prop}\label{P:changed} If $a'\in\Sing(0,2;\op)_{d-2}, f'\in\Sing(1,0;\op)_0$ and 
$l'\in\Sing(2,0;\cl)_{d-1}$ are cycles satisfying 
$[a']=[a], [f']=[f]$ and $[l']=[l]$, then we can assume that $a'\cdot (21)=(-1)^{d-1} a'$ and 
$l'\cdot (21)=(-1)^{d} l'$. Moreover,  there exist $\eta'\in \Sing(1,1;\op)_{d-1}$ 
and $\nu'\in\Sing(2,0;\op)_d$  such that

$$\begin{array}{lcclc}
(\ref{etad}')&\partial\eta'&=&(a'\circ_1^\op f'),  \\
(\ref{reld}')& \partial\nu'&=&\eta'\circ_1^\op f'-(-1)^{d-1} (\eta'\circ_1^\op f')\cdot (21)-f'\circ_1^\cl l'.
\end{array}$$
 \end{prop}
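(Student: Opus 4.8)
The plan is to mimic the structure of Proposition \ref{P:change}, producing explicit correction terms that repair the relations when the generators $a,f,l$ are replaced by homologous cycles $a',f',l'$. First I would normalize the symmetry of $a'$ and $l'$: since $[a']=[a]$ and $[l']=[l]$, and the $S_2$-action on the relevant homology is multiplication by $(-1)^{d-1}$ (resp. $(-1)^d$), I replace $a'$ by its symmetrized version $\frac{1}{2}(a'+(-1)^{d-1}a'\cdot(21))$ and similarly for $l'$; this does not change the homology class and forces $a'\cdot(21)=(-1)^{d-1}a'$ and $l'\cdot(21)=(-1)^d l'$ exactly, using that the characteristic is different from $2$. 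Then I choose primitives $a_1,f_1,l_2$ with $a'=a+\partial a_1$, $f'=f+\partial f_1$, $l'=l+\partial l_2$.

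Next I would construct $\eta'$ so that (\ref{etad}') holds. Applying $\partial$ and the Leibniz rule to the composites, I have $\partial(a'\circ_1^\op f')=\partial a'\circ_1^\op f'\pm a'\circ_1^\op\partial f'$, and the target $a'\circ_1^\op f'$ differs from $a\circ_1^\op f=\partial\eta$ by boundary terms built from $a_1$ and $f_1$. The natural guess, parallel to the $d=2$ case, is
$$\eta'=\eta+(a_1\circ_1^\op f')\pm(a\circ_1^\op f_1),$$
with signs dictated by the Koszul sign rule for the degrees $|a|=d-2$, $|f|=0$, $|a_1|=d-1$, $|f_1|=1$; I would fix these signs by a direct boundary computation so that $\partial\eta'=a'\circ_1^\op f'$. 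The only subtlety here is bookkeeping of signs, which the graded Leibniz rule determines uniquely.

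The harder step is constructing $\nu'$ satisfying (\ref{reld}'). I would start from $\nu$ and add correction terms of three types: a term $f_1\circ_1^\cl l'$ (or $f\circ_1^\cl l_2$) to account for replacing $f\circ_1^\cl l$, and terms coming from the difference between $\eta'\circ_1^\op f'-(-1)^{d-1}(\eta'\circ_1^\op f')\cdot(21)$ and $\eta\circ_1^\op f-(-1)^{d-1}(\eta\circ_1^\op f)\cdot(21)$. Substituting the formula for $\eta'$ found above and expanding, the extra pieces involve $a_1\circ_1^\op f'\circ f'$-type and $\eta\circ_1^\op f_1$-type composites; I expect a correction of the schematic form
$$\nu'=\nu-f_1\circ_1^\cl l'-f\circ_1^\cl l_2-\bigl(\eta\circ_1^\op f_1+a(f_1,f')\bigr)\cdot(\ide-(-1)^{d-1}(21)),$$
in direct analogy with the $\nu_2'$ of Proposition \ref{P:change} but with the symmetrizer $(21)$ replaced by $(-1)^{d-1}(21)$ to match Relation (\ref{reld}). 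The main obstacle is verifying that, after taking $\partial$, all the unwanted boundary terms cancel: this requires using $\partial a_1=a'-a$, the operadic associativity/equivariance identities relating $\circ_1^\op$, $\circ_1^\cl$ and the $S_2$-action, the already established (\ref{etad}') and (\ref{reld}), and crucially the symmetry normalizations $a'\cdot(21)=(-1)^{d-1}a'$ and $l'\cdot(21)=(-1)^d l'$ to kill the cross terms. Once the signs are pinned down and the equivariance of operadic insertion under $(21)$ is applied carefully, the cancellation should be a routine (if lengthy) graded computation, so I would present the final formulas for $\eta'$ and $\nu'$ and assert that a direct check of $\partial$ yields the prescribed relations.
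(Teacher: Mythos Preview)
Your approach is essentially the paper's: symmetrize $a'$ and $l'$ using $\tfrac12(\,\cdot\,+(-1)^{\epsilon}\,\cdot\,\sigma)$, choose primitives, and set $\eta',\nu'$ equal to $\eta,\nu$ plus explicit correction terms built from operadic composites of $a_1,f_1,l_2$, with signs fixed by the graded Leibniz rule. The one refinement the paper adds that you do not mention is to apply the same symmetrization trick to the \emph{primitives} as well, arranging $a_1\cdot(21)=(-1)^{d-1}a_1$ and $l_2\cdot(21)=(-1)^d l_2$; this is what makes the final formula for $\nu'$ short (e.g.\ the paper gets a single term $-a(f_1,f_1)$ rather than a symmetrized pair), and without it your cross-term cancellation will be messier, though still routine.
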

 
 \begin{proof}
  Since the ground field $\kfield$ has characteristic different from 2, replacing $a'$ by 
  $\frac{1}{2}(a'+(-1)^{d-1}a'\cdot (21))$ and $l'$ by 
  $\frac{1}{2}(l'+(-1)^{d}l'\cdot (21))$ gives the first assertion of the proposition. 
  
There exist $a_{d-1},f_1,l_d$ such that
$a'=a+\partial a_{d-1},
f'=f+\partial f_1 
\text{ and } 
l'=l+\partial l_d,$
and using the same trick, we can assume that $a_{d-1}\cdot (21)=(-1)^{d-1}a_{d-1}$ and $l_d\cdot (21)=(-1)^d l_d.$
The elements
\begin{align*}
\eta'=&\eta+(a_{d-1}\circ_1^\op f')+(-1)^{d-2}a\circ_1^\op f_1, \\
\nu'=&\nu-f_1\circ_1^\cl l'-f\circ_1^\cl l_d+(-1)^{d-1}\eta\circ_1^\op f_1-\eta\circ_1^\op f_1\cdot(21)
-a(f_1,f_1),
\end{align*}
satisfy the prescribed relations.
\end{proof}
Mimicking the proof of Theorem \ref{T:main2}, we obtain the following theorem. 

\begin{thm} The $d$-dimensional Swiss-cheese operad is not formal.
\end{thm}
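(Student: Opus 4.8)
The plan is to mimic the proof of Theorem \ref{T:main2} exactly, transporting the structural obstruction through a hypothetical zig-zag of operad quasi-isomorphisms. Assume by contradiction that $\Sing_d$ is formal, so that there is a dg operad $\P$ together with operad quasi-isomorphisms $\gamma\colon\P\to\Sing_d$ and $\psi\colon\P\to\hsc_d$. Since $H_*(\gamma)$ is an isomorphism, I first lift the generators: choose cycles $a^\P,f^\P,l^\P\in\P$ whose $\gamma$-images represent the classes $[a],[f],[l]$. Using Proposition \ref{P:changed} I may assume $\gamma(x^\P)=x'$ for $x\in\{a,f,l\}$, where $a',f',l'$ are cycles satisfying the equivariance conditions $a'\cdot(21)=(-1)^{d-1}a'$ and $l'\cdot(21)=(-1)^{d}l'$, together with Relations $(\ref{etad}')$ and $(\ref{reld}')$ for some $\eta',\nu'$.

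Next I build the analogue of $\eta^\P$. By Relation $(\ref{etad}')$ applied in homology, $H_*(\gamma)([a^\P]\circ_1^\op[f^\P])=0$, and since $\gamma$ is a quasi-isomorphism the class $[a^\P\circ_1^\op f^\P]$ already vanishes in $H_{d-2}(\P(1,1;\op))$. Hence there exists $\eta^\P\in\P(1,1;\op)_{d-1}$ with $\partial\eta^\P=a^\P\circ_1^\op f^\P$. Applying $\gamma$ gives $\partial\gamma(\eta^\P)=a'\circ_1^\op f'=\partial\eta'$, and because $\hsc_d(1,1;\op)$ is concentrated in degree $0$ (the space $\SC_d(1,1;\op)$ being contractible) there is some $x_d\in\Sing_d(1,1;\op)_d$ with $\partial x_d=\gamma(\eta^\P)-\eta'$. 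I then form the cycle $\eta^\P\circ_1^\op f^\P-(-1)^{d-1}(\eta^\P\circ_1^\op f^\P)\cdot(21)$ in degree $d-1$; using that $H_{d-1}(\P(2,0;\op))$ is one-dimensional and spanned by $[f^\P\circ_1^\cl l^\P]$ (transported from the fact that $[f'\circ_1^\cl l']$ spans $\hsc_d(2,0;\op)_{d-1}$), I obtain $\lambda\in\kfield$ and $\nu^\P\in\P(2,0;\op)_d$ with
\begin{equation*}
\eta^\P\circ_1^\op f^\P-(-1)^{d-1}(\eta^\P\circ_1^\op f^\P)\cdot(21)=\lambda\,(f^\P\circ_1^\cl l^\P)+\partial\nu^\P.
\end{equation*}
Applying $\gamma$, substituting $\gamma(\eta^\P)=\eta'+\partial x_d$, and comparing with Relation $(\ref{reld}')$ pins down $\lambda=1$, since $[f'\circ_1^\cl l']$ spans $\hsc_d(2,0;\op)_{d-1}$.

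The contradiction comes from applying $\psi$. For degree reasons $\psi(\eta^\P)=0=\psi(\nu^\P)$, as $\hsc_d(1,1;\op)$ is concentrated in degree $0$ and there is no target in degree $d$. Since $\psi$ is a quasi-isomorphism of operads there are scalars $\lambda_f,\lambda_l\in\kfield^*$ with $\psi(f^\P)=\lambda_f e_{1,0}$ and $\psi(l^\P)=\lambda_l$ times the bracket generator of $\hsc_d(2,0;\cl)_{d-1}$. Applying $\psi$ to the displayed relation forces $\lambda\lambda_f\lambda_l$ times $e_{1,0}\circ_1^\cl(\text{bracket})=0$ in $\hsc_d$; but this composite is precisely the element that the algebraic description of $\hsc_d$-algebras demands to vanish, namely the relation $[h(a),b]_{d-1}=0$, and here it is nonzero as an operadic generator combination, giving the contradiction with $\lambda=1$ and $\lambda_f,\lambda_l\neq 0$.

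The main obstacle is the careful bookkeeping of signs and symmetric-group actions in characteristic $\neq 2$: the switch from picking a generator of the associative product (as in $d=2$) to picking a generator of the degree-$(d-1)$ bracket changes which antisymmetrized combination $\eta^\P\circ_1^\op f^\P-(-1)^{d-1}(\eta^\P\circ_1^\op f^\P)\cdot(21)$ carries the nonzero Massey-type class, and one must verify that $\psi$ sends this combination to a genuinely nonzero operadic element of $\hsc_d$ rather than to something that vanishes by antisymmetry. Once the equivariance conditions from Proposition \ref{P:changed} are in force, this amounts to identifying the image under $\psi$ with the forbidden composite $e_{1,0}\circ_1^\cl g_2^{(d-1)}$, which is exactly the operadic incarnation of the centrality relation and hence nonzero.
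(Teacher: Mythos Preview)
Your argument is correct and follows exactly the route the paper indicates (the paper's own proof is the single sentence ``Mimicking the proof of Theorem~\ref{T:main2}''). The structure---lift generators, invoke Proposition~\ref{P:changed}, build $\eta^\P$, extract $\lambda$, pin $\lambda=1$ via $\gamma$, then derive a contradiction via $\psi$---is precisely what is intended.

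One point of confusion in your write-up: you describe $e_{1,0}\circ_1^\cl g_2^{(d-1)}$ as ``the operadic incarnation of the centrality relation $[h(a),b]_{d-1}=0$'' and say the algebraic description ``demands it to vanish''. This is not right. The element $e_{1,0}\circ_1^\cl g_2$ encodes the operation $(a,b)\mapsto h([a,b]_d)$ with two closed inputs, whereas the centrality relation concerns $[h(a),b]_{d-1}$, which has one closed and one open input and lives in $\hsc_d(1,1;\op)$. The reason $e_{1,0}\circ_1^\cl g_2\neq 0$ is much simpler and is something you already used: it is (up to the nonzero scalar $\lambda_f\lambda_l$) the image under $\psi_*$ of the class $[f^\P\circ_1^\cl l^\P]$, which spans the one-dimensional space $H_{d-1}(\P(2,0;\op))\cong\hsc_d(2,0;\op)_{d-1}\cong H_{d-1}(S^{d-1})$. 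So the contradiction is immediate once $\lambda=1$, with no appeal to centrality needed.
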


\section{Massey operadic products and non-formality}

In this section we show that the technics developped for proving the non-formality of the Swiss-cheese operad can be generalized to give a criteria
for non-formality of a topological operad. This criteria is similar in spirit to the one applied in \cite[Section 3]{FM08}. For  clarity of exposition
we only treat the uncolored case.

\medskip

For $\C$ a topological operad, we denote by $C(\C)$ its singular chain complex and by $H(\C)$ its homology.
We say that $\C$ is formal if there exists a dg operad $\P$ together with a zig-zag of operad quasi-isomorphisms
$$\xymatrix{ & \P \ar[dl]_{\gamma}\ar[dr]^{\psi} & \\
C(\C) & &H(\C)}$$

\subsection{Massey operadic products}\label{S:massey} Let $\P$ be a dg operad.
 Let $a,b,c$ be cycles in $\P$ having respectively arity $p,q,r$. If there exist $x,y\in \P$ such that 
 $dx=a\circ_i b$ and $dy=b\circ_j c$, then
 $$M=x\circ_{i+j-1} c-(-1)^{|a|} a\circ_i y$$
 satisfies $dM=0$. 
 The class $[M]$ in $H(\P)$ is called a Massey operadic product of type $I$.  
 
 The proof of the following proposition follows the classical proof for triple Massey products.
 
 \begin{prop} With the above notation, the set of Massey operadic products of type $I$  depends only on the classes $[a],[b],[c]$. This set is the coset 
 $[M]+[a]\circ_i H_{|b|+|c|+1}(\P)(q+r-1)+H_{|a|+|b|+1}(\P)(p+q-1)\circ_{i+j-1}[c],$ and is denoted $\langle [a],[b],[c]\rangle_I$.
 \end{prop}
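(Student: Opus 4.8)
The plan is to transport, essentially verbatim, the classical computation of the indeterminacy of a triple Massey product to the operadic setting. I would prove two statements: (1) for a \emph{fixed} choice of cycles $a,b,c$, as $x$ ranges over all solutions of $dx=a\circ_i b$ and $y$ over all solutions of $dy=b\circ_j c$, the resulting classes $[M]$ fill out exactly the displayed coset; and (2) this coset is unchanged when $a$, $b$, or $c$ is replaced by a homologous cycle. Before anything else I would fix the bidegrees: from $dx=a\circ_i b$ one reads off $|x|=|a|+|b|+1$ and arity $p+q-1$, and likewise $|y|=|b|+|c|+1$ with arity $q+r-1$, so both terms of $M$ have degree $|a|+|b|+|c|+1$ and arity $p+q+r-2$. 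This is exactly the common bidegree of $[a]\circ_i H_{|b|+|c|+1}(\P)(q+r-1)$ and of $H_{|a|+|b|+1}(\P)(p+q-1)\circ_{i+j-1}[c]$, so the coset is well-formed.

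For statement (1), fix $b,c$ and one solution $y$. Any two solutions $x,x'$ of $dx=a\circ_i b$ differ by a cycle $z=x'-x$ of bidegree $(|a|+|b|+1,\,p+q-1)$, and then $M'-M=z\circ_{i+j-1}c$, whence $[M']-[M]=[z]\circ_{i+j-1}[c]$ sweeps out all of $H_{|a|+|b|+1}(\P)(p+q-1)\circ_{i+j-1}[c]$ as $z$ runs over the cycles of that bidegree. Symmetrically, varying $y$ while keeping $x$ fixed moves $[M]$ by $-(-1)^{|a|}[a]\circ_i[w]$ for an arbitrary cycle $w$, and since $-(-1)^{|a|}$ is a unit this fills out $[a]\circ_i H_{|b|+|c|+1}(\P)(q+r-1)$. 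Hence the set of Massey products attached to the fixed triple $(a,b,c)$ is precisely the stated coset.

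For statement (2), I would treat the three replacements one at a time, in each case exhibiting companion solutions $x',y'$ to the perturbed defining equations and checking that $M'-M$ is a boundary. When $a'=a+d\alpha$ (with $b,c$ and $y'=y$ unchanged), the Leibniz rule together with $db=0$ gives $a'\circ_i b=d(x+\alpha\circ_i b)$, so $x'=x+\alpha\circ_i b$ is admissible; then operadic associativity $(\alpha\circ_i b)\circ_{i+j-1}c=\alpha\circ_i(b\circ_j c)=\alpha\circ_i(dy)$ followed by one more use of Leibniz collapses the difference to $M'-M=-(-1)^{|a|}d(\alpha\circ_i y)$. The replacement $c'=c+d\gamma$ is the mirror image: taking $y'=y+(-1)^{|b|}b\circ_j\gamma$ and $x'=x$ yields $M'-M=-(-1)^{|a|+|b|}d(x\circ_{i+j-1}\gamma)$. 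The replacement $b'=b+d\beta$ perturbs both equations at once; choosing $x'=x+(-1)^{|a|}a\circ_i\beta$ and $y'=y+\beta\circ_j c$ and using $(a\circ_i\beta)\circ_{i+j-1}c=a\circ_i(\beta\circ_j c)$ gives the sharpest outcome, $M'-M=0$. Since the two indeterminacy subgroups depend on $(a,b,c)$ only through the classes $[a],[b],[c]$, combining this with statement (1) shows $\langle[a],[b],[c]\rangle_I$ depends only on those classes and equals the asserted coset.

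The one genuinely delicate point is the sign and position bookkeeping. Every cancellation rests on pairing the Leibniz rule $d(\alpha\circ_k\beta)=(d\alpha)\circ_k\beta+(-1)^{|\alpha|}\alpha\circ_k(d\beta)$ with the sequential--composition identity, and one must keep in mind that the $j$-th input of $b$ sits at slot $i+j-1$ of $a\circ_i b$, so that inserting $c$ there is exactly what operadic associativity controls. A misplaced Koszul sign would destroy the telescoping in the $a'$ and $c'$ cases, so I would verify each of the three difference computations directly rather than by analogy, and I expect this to be the only step requiring real care.
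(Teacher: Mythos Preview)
Your proposal is correct and is precisely the classical triple-Massey-product argument, transported to the operadic setting with the appropriate associativity identity $(\alpha\circ_i\beta)\circ_{i+j-1}\gamma=\alpha\circ_i(\beta\circ_j\gamma)$; the paper itself gives no details beyond the sentence ``The proof of the following proposition follows the classical proof for triple Massey products,'' so you have in fact supplied what the paper omits. Your sign and bidegree bookkeeping checks out in all three replacement cases.
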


 Similarly, if there exist $u,v\in \P$ such that $du=a\circ_i b$ and $dv=a\circ_j c$ with $i<j$, then
 $$N=u\circ_{j+q-1} c-(-1)^{|b||c|} v\circ_i b$$
 satisfies $dN=0$. The class $[N]$ in $H(\P)$ is called a Massey operadic product of type $II$. 
 
  \begin{prop} With the above notation, the set of Massey operadic product of type $II$ depends only on the classes $[a],[b],[c]$. This set  is the coset 
 $[N]+ H_{|a|+|b|+1}(\P)(p+q-1)\circ_{j+q-1}[c]+ H_{|a|+|c|+1}(\P)(p+r-1)\circ_{i}[b]$ and is denoted $\langle [a],[b],[c]\rangle_{II}$.
 \end{prop}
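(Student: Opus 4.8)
The plan is to mirror the classical indeterminacy argument for triple Massey products, transposed into the operadic setting. Recall that a Massey operadic product of type $II$ is built from cycles $a,b,c$ of arities $p,q,r$ together with choices of $u,v\in\P$ satisfying $du=a\circ_i b$ and $dv=a\circ_j c$ with $i<j$, and is defined by $N=u\circ_{j+q-1} c-(-1)^{|b||c|}v\circ_i b$. The first thing to verify is that $N$ is indeed a cycle (this is the assertion preceding the statement, which I may assume), so that $[N]$ lives in $H(\P)$. The goal is then twofold: first, to show that the \emph{set} of all such classes $[N]$, as the auxiliary chains $u,v$ range over all admissible choices, depends only on the homology classes $[a],[b],[c]$; and second, to identify this set explicitly as the stated coset.

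First I would isolate the two sources of indeterminacy. Fixing the cycles $a,b,c$, any two valid choices of $u$ differ by a cycle: if $du=du'=a\circ_i b$, then $u-u'$ is a cycle of degree $|a|+|b|+1$ and arity $p+q-1$, and similarly for $v$. Replacing $u$ by $u+z$ with $z$ a cycle changes $N$ by $z\circ_{j+q-1}c$, whose class lies in $H_{|a|+|b|+1}(\P)(p+q-1)\circ_{j+q-1}[c]$; replacing $v$ by $v+w$ changes $N$ by $-(-1)^{|b||c|}w\circ_i b$, whose class lies in $H_{|a|+|c|+1}(\P)(p+r-1)\circ_i[b]$. Conversely, every element of these two subgroups is realized by such a replacement, since a homology class can always be represented by a cycle and added to the chosen $u$ or $v$. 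This exhibits the full coset $[N]+H_{|a|+|b|+1}(\P)(p+q-1)\circ_{j+q-1}[c]+H_{|a|+|c|+1}(\P)(p+r-1)\circ_i[b]$ as the set obtained by varying $u,v$ while keeping $a,b,c$ fixed.

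The remaining and more delicate step is to show that changing the \emph{representatives} of the cycles $a,b,c$ within their homology classes does not enlarge the set beyond this coset. I would treat one variable at a time, say replacing $a$ by $a'=a+\partial\alpha$. Then $a'\circ_i b=a\circ_i b+\partial(\alpha\circ_i b)$, using that $b$ is a cycle and the Leibniz rule for the operadic composition $\circ_i$, so $u'=u+\alpha\circ_i b$ is an admissible choice for $a'$; similarly $v'=v+\alpha\circ_j c$ works for the second relation. Substituting into the formula for $N'$ and expanding, one checks that the extra terms either cancel or reassemble into a boundary, so that $[N']=[N]$ for this particular pair of auxiliary choices, and then the indeterminacy computation of the previous paragraph absorbs all other choices. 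The analogous substitutions $b\mapsto b+\partial\beta$ and $c\mapsto c+\partial\gamma$ are handled the same way, with the correcting chains chosen to keep the two defining equations satisfied. The main obstacle here is purely bookkeeping: keeping track of the Koszul signs $(-1)^{|b||c|}$ and the shifting of composition indices from $\circ_i,\circ_j$ to $\circ_{j+q-1}$ under the insertions, so that the corrections genuinely land in the two prescribed subgroups rather than producing stray terms. Once the sign and index conventions are pinned down consistently with the operadic axioms, the verification is routine, exactly parallel to the type $I$ case treated in the preceding proposition.
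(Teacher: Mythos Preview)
Your proposal is correct and follows exactly the approach the paper intends: the paper does not write out a proof for either the type $I$ or type $II$ proposition, merely remarking before the type $I$ statement that ``the proof of the following proposition follows the classical proof for triple Massey products,'' and leaving the type $II$ case implicit. Your argument is precisely this classical indeterminacy computation transported to the operadic setting, so there is nothing to contrast.
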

 
 \begin{defn} In case of existing Massey products $ \langle[a],[b],[c] \rangle$ of type $I$ or $II$, if $0$ does not belong to $ \langle[a],[b],[c] \rangle$ we say that $\P$ admits non-vanishing  Massey operadic products.
\end{defn}

The following proposition is straightforward.

\begin{prop}\label{P:masseymor} Let $\varphi:\P\rightarrow \mathcal{Q}$ be a morphism of dg operads. For $ \langle[a],[b],[c] \rangle$ a Massey operadic product of type $I$ or $II$ in $\P$ one has
$$\varphi_*( \langle[a],[b],[c] \rangle)= \langle\varphi_*([a]),\varphi_*([b]),\varphi_*([c]) \rangle.$$
If $\mathcal{Q}$ admits non-vanishing  Massey operadic product so does $\P$.
\end{prop}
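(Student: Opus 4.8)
The plan is to exploit that $\varphi$ is a morphism of dg operads: it is linear, preserves arity and homological degree, commutes with the differential, and commutes with every partial composition $\circ_i$. All three assertions reduce to pushing these compatibilities through the explicit formulas of Section~\ref{S:massey}, so the proof is essentially a bookkeeping argument.

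First I would treat the type $I$ product. Suppose $a,b,c$ are cycles of arities $p,q,r$ with $dx=a\circ_i b$ and $dy=b\circ_j c$, so that $M=x\circ_{i+j-1}c-(-1)^{|a|}a\circ_i y$ represents an element of $\langle[a],[b],[c]\rangle_I$. Applying $\varphi$, the elements $\varphi(a),\varphi(b),\varphi(c)$ are again cycles of the same arities, and since $\varphi d=d\varphi$ and $\varphi$ commutes with $\circ_i$ we obtain $d\varphi(x)=\varphi(a)\circ_i\varphi(b)$ and $d\varphi(y)=\varphi(b)\circ_j\varphi(c)$. Hence $\langle\varphi_*[a],\varphi_*[b],\varphi_*[c]\rangle_I$ is defined, and because $\varphi$ preserves degree (so $|\varphi(a)|=|a|$ and the sign is unchanged) one has $\varphi(M)=\varphi(x)\circ_{i+j-1}\varphi(c)-(-1)^{|a|}\varphi(a)\circ_i\varphi(y)$, which is exactly a defining representative of the target product. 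Therefore $\varphi_*[M]\in\langle\varphi_*[a],\varphi_*[b],\varphi_*[c]\rangle_I$.

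Next I would match the indeterminacies. The coset description from the preceding proposition is $[M]+[a]\circ_i H_{|b|+|c|+1}(\P)(q+r-1)+H_{|a|+|b|+1}(\P)(p+q-1)\circ_{i+j-1}[c]$ in the source, with the analogous expression in the target. Since $\varphi_*$ is graded-linear and commutes with $\circ_i$, it carries $[a]\circ_i H_*(\P)$ into $\varphi_*[a]\circ_i H_*(\mathcal{Q})$ and $H_*(\P)\circ_{i+j-1}[c]$ into $H_*(\mathcal{Q})\circ_{i+j-1}\varphi_*[c]$; combined with the representative computed above, this yields $\varphi_*\langle[a],[b],[c]\rangle_I\subseteq\langle\varphi_*[a],\varphi_*[b],\varphi_*[c]\rangle_I$, with equality precisely when $\varphi_*$ is onto in the relevant bidegrees (in particular when $\varphi$ is a quasi-isomorphism, which is the case needed for the formality application). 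The type $II$ product is handled verbatim, replacing the pair $(x,y)$ by $(u,v)$, the slot $i+j-1$ by $j+q-1$, and the sign $(-1)^{|a|}$ by $(-1)^{|b||c|}$; no new idea is required.

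Finally, the non-vanishing statement follows by contraposition from the inclusion just established. If $0\in\langle[a],[b],[c]\rangle$ in $\P$, then $0=\varphi_*(0)\in\varphi_*\langle[a],[b],[c]\rangle\subseteq\langle\varphi_*[a],\varphi_*[b],\varphi_*[c]\rangle$, so this product also vanishes in $\mathcal{Q}$; equivalently, a non-vanishing product in $\mathcal{Q}$ forces the corresponding product in $\P$ to be non-vanishing. The only point demanding any care is the combinatorial bookkeeping of the two indeterminacy cosets—tracking arities, degrees, and the slots $i$ versus $i+j-1$ (resp.\ $j+q-1$)—together with the observation that $\varphi_*$ respects them. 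There is no genuine obstacle, which is exactly why the statement is labelled straightforward.
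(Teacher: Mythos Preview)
Your argument is correct and is precisely the routine verification the paper has in mind; the paper gives no proof beyond declaring the proposition ``straightforward,'' and your pushing of cycles, bounding chains, and indeterminacy cosets through $\varphi$ is exactly that verification. Your remark that the displayed equality is in general only an inclusion, with genuine equality requiring surjectivity of $\varphi_*$ in the relevant bidegrees, is a valid sharpening of the paper's somewhat informal statement---and it is harmless here, since in the only application (Corollary~\ref{T:general}) both $\gamma$ and $\psi$ are quasi-isomorphisms.
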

 
 \begin{cor}\label{T:general}
  Let $\C$ be a topological operad. If  $C(\C)$ admits non-vanishing Massey operadic products then $\C$ is not formal.
   \end{cor}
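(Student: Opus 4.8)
The plan is to reduce the statement to two facts: that a quasi-isomorphism transports the (non-)vanishing of Massey operadic products, and that the homology operad $H(\C)$, carrying the zero differential, can never admit a non-vanishing Massey product. Assuming $\C$ is formal, I would fix a dg operad $\P$ with quasi-isomorphisms $\gamma:\P\to C(\C)$ and $\psi:\P\to H(\C)$, and derive a contradiction with the hypothesis that $C(\C)$ admits a non-vanishing Massey operadic product.

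\medskip

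\noindent\emph{Step 1: zero-differential operads have only vanishing Massey products.} I would first record that in a dg operad with zero differential every \emph{defined} Massey operadic product contains $0$. Indeed, with $d=0$ the defining conditions $dx=a\circ_i b$ and $dy=b\circ_j c$ force $a\circ_i b=0$ and $b\circ_j c=0$, with $x,y$ then arbitrary of the appropriate degrees; the choice $x=y=0$ is admissible and yields the representative $M=0$ in the formula $M=x\circ_{i+j-1}c-(-1)^{|a|}a\circ_i y$, so $0\in\langle[a],[b],[c]\rangle_I$. The same computation with $u=v=0$ handles type $II$. In particular $H(\C)$ admits no non-vanishing Massey operadic product.

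\medskip

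\noindent\emph{Step 2: pull the product back to $\P$ along $\gamma$.} Given a non-vanishing $\langle\alpha,\beta,\delta\rangle$ in $C(\C)$, I would pick cycles $a,b,c\in\P$ with $\gamma_*[a]=\alpha$, $\gamma_*[b]=\beta$, $\gamma_*[c]=\delta$, using that $\gamma_*$ is an isomorphism on homology. Since the product is defined, $\alpha\circ_i\beta$ and $\beta\circ_j\delta$ are null-homologous; injectivity of $\gamma_*$ then forces $a\circ_i b$ and $b\circ_j c$ to be boundaries in $\P$, so $\langle[a],[b],[c]\rangle$ is defined in $\P$ with the same type and indices. By Proposition \ref{P:masseymor}, its image under $\gamma_*$ is $\langle\alpha,\beta,\delta\rangle$, which omits $0$; as $\gamma_*(0)=0$, this forces $0\notin\langle[a],[b],[c]\rangle$ in $\P$. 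Thus $\P$ admits a non-vanishing Massey product, which is precisely the conclusion of the last clause of Proposition \ref{P:masseymor} applied to $\gamma$.

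\medskip

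\noindent\emph{Step 3: push the same product forward along $\psi$.} Applying Proposition \ref{P:masseymor} to $\psi$ gives $\psi_*\langle[a],[b],[c]\rangle=\langle\psi_*[a],\psi_*[b],\psi_*[c]\rangle$, and by Step 1 this coset in $H(\C)$ contains $0$. Since $\psi$ is a quasi-isomorphism, $\psi_*$ is injective on homology, so the only preimage of $0$ is $0$, whence $0\in\langle[a],[b],[c]\rangle$ in $\P$. This contradicts Step 2, so no such $\P$ exists and $\C$ is not formal. The one delicate point, which I would state most carefully, is precisely this bookkeeping of how ``$0$ lies / does not lie in the coset'' is transferred across the isomorphisms $\gamma_*$ and $\psi_*$, together with the verification that the transported Massey product is still defined; everything else is a direct appeal to Proposition \ref{P:masseymor} and the triviality of Massey products in the zero-differential operad $H(\C)$.
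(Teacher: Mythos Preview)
Your argument is correct and follows essentially the same route as the paper: assume formality, lift the non-vanishing Massey product from $C(\C)$ to $\P$ via $\gamma$, then push it to $H(\C)$ via $\psi$ where all Massey products vanish, obtaining a contradiction. The only cosmetic difference is that in the final step you invoke injectivity of $\psi_*$ to pull $0$ back into the coset in $\P$, whereas the paper phrases the same point via surjectivity of $\psi_*$; since $\psi_*$ is an isomorphism, both formulations are equivalent.
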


 \begin{proof}
 We assume by contradiction that
 there is a dg operad $\P$ together with a zig-zag of operad quasi-isomorphisms
$$\xymatrix{ & \P \ar[dl]_{\gamma}\ar[dr]^{\psi} & \\
C(\C) & &H(\C)}$$

Let us assume that $C(\C)$ admits a non-vanishing Massey operadic product $ \langle[a],[b],[c] \rangle_I$ of type $I$.
Because $\gamma$ is a quasi-isomorphism, there exists  for every $u\in\{a,b,c\}$ a cycle $u^\P\in \P$ such that
$\gamma_*([u^\P])=[u]$.
Injectivity of $\gamma_*$ implies $[a^\P\circ_i b^\P]=0=[b^\P\circ_j c^\P]$, hence there is a well defined Massey operadic product 
$ \langle[a^\P],[b^\P],[c^\P] \rangle_I$ which is sent to $ \langle[a],[b],[c] \rangle_I$ via $\gamma_*$. By Proposition \ref{P:masseymor}, $0\not\in    \langle[a^\P],[b^\P],[c^\P] \rangle_I$,
hence $\P$ admits non-vanishing Massey operadic products. In $H(\C)$ however all  Massey operadic products vanish and
$$\psi_*( \langle[a^\P],[b^P],[c^P] \rangle_I)= \langle\psi_*[a^\P],\psi_*[b^\P],\psi_*[c^\P] \rangle_I\subset H(\C).$$
The surjectivity of $\psi_*$ implies that $0\in  \langle[a^\P],[b^\P],[c^\P] \rangle_I$, a contradiction. The proof goes the same if we assume that
$C(\C)$ admits a non-vanishing Massey operadic product $ \langle[a],[b],[c] \rangle_{II}$ of type $II$.
\end{proof}

\subsection{Conclusion}

Proposition \ref{P:changed} consists in proving that $0\not\in \langle a;f,f\rangle_{II}$. Indeed, since
$\partial\eta=a\circ_1^\op f$ and $\partial((-1)^{d-1}\eta\cdot(21))=(-1)^{d-1}(a\circ_1^\op f)\cdot(21)=a\circ_2^\op f$,
Propositon \ref{P:changed} shows that the class of 
$$M=\eta\circ_1^\op f-(-1)^{d-1}\eta\cdot(21)\circ_1^\op f=\eta\circ_1^\op f-(-1)^{d-1}\eta\circ_1^\op f\cdot (21)$$
does not vanish because it is equal to $[f\circ_1^\cl l]$. Moreover $\hsc_d(\cl,\op;\op)_{d-1}=0$
implies that $\langle a;f,f\rangle_{II}=[M]$, so that $0\not\in \langle a;f,f\rangle_{II}$.
A colored version of Corollary \ref{T:general} implies that the Swiss-cheese operad is not formal. 
\bigskip

\noindent{\bf Aknowledgement.}
The author would like to thank the organizers of the GDO (Grothendieck-Teichm\"uller Groups, Deformation and Operads) 
program held at the Institute Isaac Newton in 2013 where this result has been obtained and the ANR HOGT.
She would like to thank Damien Calaque, 
Gregory Ginot, Eduardo Hoefel, Victor Turchin and Sasha Voronov for fruitful discussions.

\def\cprime{$'$} \def\cprime{$'$} \def\cprime{$'$} \def\cprime{$'$}
\providecommand{\bysame}{\leavevmode\hbox to3em{\hrulefill}\thinspace}
\providecommand{\MR}{\relax\ifhmode\unskip\space\fi MR }
\providecommand{\MRhref}[2]{%
  \href{http://www.ams.org/mathscinet-getitem?mr=#1}{#2}
}
\providecommand{\href}[2]{#2}

\end{document}